\theoremstyle{plain}
\newtheorem{theorem}{Theorem}%[section]
\newtheorem{cnd}{Condition}%[section]
\newtheorem{lemma}{Lemma}%[section]
\newcommand{\infint}{\int_{-\infty}^{\infty}}
\def\convd{\stackrel{\cal D}{\rightarrow}}
\def\convp{\stackrel{\operatorname{P}}{\rightarrow}}
\def\eqd{\stackrel{\operatorname{\cal D}}{=}}
\def\ex{{\rm E\,}}
\def\var{\mathop{\rm Var}\nolimits}
\def\cov{\mathop{\rm Cov}\nolimits}
\begin{document}
%\date{\today}
\title{Weak convergence of the supremum distance for supersmooth  kernel deconvolution}
\author{Bert van Es\\
{\normalsize Korteweg-de Vries Institute for Mathematics}\\
{\normalsize Universiteit van Amsterdam}\\
{\normalsize Plantage Muidergracht 24}\\
{\normalsize 1018 TV Amsterdam}\\
{\normalsize The Netherlands}\\
{\normalsize vanes@science.uva.nl}\\
{}\\
{\normalsize Shota Gugushvili}\\
{\normalsize Eurandom}\\
{\normalsize Technische Universiteit Eindhoven}\\
{\normalsize P.O.\ Box 513}\\
{\normalsize 5600 MB Eindhoven}\\
{\normalsize The Netherlands}\\
{\normalsize gugushvili@eurandom.tue.nl}
}
\maketitle
\begin{abstract}
We derive the asymptotic distribution of the supremum distance of
the deconvolution kernel density estimator to its expectation for
certain supersmooth deconvolution problems. It turns out  that the
asymptotics are essentially different from the corresponding results
for   ordinary smooth deconvolution.
\medskip\\
{\sl Keywords:} Deconvolution, kernel density estimator, Rayleigh distribution, supremum distance.\\
%{\sl Running headline:} Deconvolution density estimation.\\
{\sl AMS subject classification:} 62G07
\end{abstract}
\newpage

%\doublespacing

\section{Introduction and results}

Consider the classical deconvolution problem:
let $X_1,\ldots, X_n$ be i.i.d.\ observations, where $ X_i=Y_i+Z_i
$ and $Y_i$ and $Z_i$ are independent. Assume that the
unobservable $Y_i$ have distribution function $F$ and density $f$,
and that the random variables $Z_i$ have a known density $k$. Note
that the density $g$ of $X_i$ is equal to the convolution of $f$
and $k$. The nonparametric deconvolution problem is the problem
of estimating $f$ or $F$ from the observations $X_i$. Thus we want
to recover the distribution of $Y_i$ using the contaminated
measurements $X_i.$ Additional information on measurement error models and many practical examples can be found in \citet{carroll3}.

A popular density estimator for this problem is the deconvolution kernel density estimator introduced in \citet{carroll1} and \citet{carroll2}. This estimator is defined as
\begin{equation}\label{fnh}
f_{nh}(x)=\frac{1}{2\pi} \int_{-\infty}^\infty
e^{-itx} \frac{\phi_w(ht)\phi_{emp}(t)}{ \phi_k(t)}\,dt
=
\frac{1}{nh}\sum_{j=1}^n v_h\Big(\frac{x-X_j}{h}\Big),
\end{equation}
with
$$
v_h(u)=\frac{1}{2\pi}\infint \frac{\phi_w(s)}{\phi_k(s/h)}\
e^{-isu}ds.
$$
Here $w$ denotes a {\em kernel function}, $h>0$ is a {\em bandwidth}, $\phi_{emp}$ is the empirical characteristic function of the sample defined by $ \phi_{emp}(t) = (1/n)\sum_{j=1}^n
e^{itX_j},$ and $\phi_w$ and $\phi_k$ denote the characteristic
functions of $w$ and $k,$ respectively. Note that \eqref{fnh} is
not a standard kernel density estimator, because the kernel
function $v_h$ depends on the bandwidth $h.$ For an introduction to the estimator \eqref{fnh} see e.g.\ \citet{jones}.

The rate of decay to zero at minus and plus infinity of the
modulus of the characteristic function $\phi_k,$ and consequently
the smoothness of $k,$ is crucial to the asymptotic behaviour of
\eqref{fnh}. Two cases have been distinguished, the {\em ordinary
smooth case}, where $|\phi_k|$ decays algebraically to zero, and
the {\em supersmooth case}, where it decreases exponentially. The
asymptotics in the ordinary smooth case are essentially the same
as for a kernel estimator of a higher order derivative of a
density, see e.g.\ \citet{fan2}, \citet{fan4} and \citet{vanes0}.
The asymptotics in the supersmooth case have been studied e.g.\ in
\citet{fan2} and \citet{vanes2,vanes1}.

%It is well established that the expectation of $f_{nh}(x)$ is
%equal to the expectation of a standard kernel density estimator
%based on observations from $f$, see Stefanski \& Carroll (1990).
%Hence its asymptotic bias is well known from standard kernel
%estimation theory as in for instance Wand \& Jones (1995).

Notice that the above papers study local properties of the estimator \eqref{fnh},
i.e.\ its pointwise behaviour. We, on the other hand, will focus on the asymptotic
behaviour of the supremum distance of the estimator to its expectation, which
provides a global measure of its performance. Accordingly, define
\begin{equation}\label{supdist}
M_n = \sup_{0\leq x\leq 1} |f_{nh}(x) - \ex [f_{nh}(x)]|.
\end{equation}
The fact that the supermum is taken over $[0,1]$ is not a
restriction of generality and is for convenience only. One could
have considered any interval $[a,b].$ An alternative here is to
consider the integrated squared error of the estimator $f_{nh}$.
This was done in \citet{holzmann}.

The asymptotic distribution of the supremum distance similar to
\eqref{supdist}, namely $
\sup_{x\in[0,1]}(g(x))^{-1/2}|g_{nh}(x)-\ex[g_{nh}(x)]|, $ for an
ordinary kernel density estimator $g_{nh}$ in the direct density
estimation setting (i.e.\ in the error-free case) was derived in
\citet{bickelr}. Owing in a certain sense to the similarity of the
asymptotics in the ordinary smooth deconvolution problem to that
in the direct density estimation problem, qualitatively similar
results were obtained in \citet{bissantz} in the ordinary smooth
deconvolution problem for the supremum distance
$\sup_{x\in[0,1]}(g(x))^{-1/2}|f_{nh}(x)-\ex[f_{nh}(x)]|.$
Normalisation with $\sqrt{g(x)}$ is explainable by the fact that
the expression for the asymptotic variance in the asymptotic
normality theorem for the estimator $f_{nh}(x)$ in the ordinary
smooth deconvolution problem involves $g(x),$ see \citet{fan2}. No
direct extension of the methods used in \citet{bickelr} to the
supersmooth deconvolution problem is possible and derivation of
the asymptotic distribution of \eqref{supdist} requires a
different approach. This is precisely the task of the present
paper. Notice that in \eqref{supdist} we do not have to normalise
with $\sqrt{g(x)},$ because the asymptotic variance in the
asymptotic normality theorem for this case does not depend on $g,$
but only on the error density $k$ (in some global way), see
\citet{vanes1}.

We now state the conditions on the density
%$f$ and
$k$ and kernel $w,$ which will be used throughout the paper. The
condition on $k$ which defines supersmooth deconvolution is given
in Condition \ref{condk}.
%
%\begin{cnd}
%\label{condf} Let the characteristic function $\phi_f$ be integrable.
%\end{cnd}
%
%Note that this condition implies that $f$ is continuous.

\begin{cnd}
\label{condk} Assume that
\begin{equation}
\label{chk} \phi_k(t)= C
|t|^{\lambda_0}\exp\left[-|t|^{\lambda}/\mu\right](1+o(|t|^{-1}))
\end{equation}
as $|t|\rightarrow \infty,$ for a constant $0<\lambda\leq 2$ and
some constants $\mu>0,\lambda_0\in\mathbb{R}$ and
$C\in\mathbb{R}.$ Furthermore, let $\phi_k(t)\neq 0$ for all
$t\in{\mathbb{R}}.$
\end{cnd}

Condition \ref{condk} is stronger than the usual condition on $k$
in supersmooth deconvolution given e.g.\ in \citet{vanes1}, where
the term $o(|t|^{-1})$ is not present and one just has the
asymptotic equivalence.

\begin{cnd}
\label{condw}
Let $\phi_w$ be real-valued, symmetric and have support $[-1,1].$ Let $\phi_w(0)=1,$ and
assume $\phi_w(1-t)=At^{\alpha}+o(t^{\alpha})$ as $t\downarrow 0$ for some constants $A$ and $\alpha\geq 0.$
\end{cnd}
For examples of such kernels see for instance \cite{vanes1}.

The next theorem establishes the asymptotic distribution of $M_n$,
which could prove useful for the construction of uniform
confidence bands around $f$. Since it will appear repeatedly in
the paper, we will write $\zeta(h)$ for $\exp( 1/(\mu
h^\lambda))$.
\begin{theorem}\label{thm:sup}
Assume Condition \ref{condk} for $\lambda=2$ and Condition
\ref{condw} and let $\ex [X_j^2]<\infty$. Let $V$ denote a
positive random variable with a Rayleigh distribution with density
$f_V(x)=x\exp[{-x^2/2}]I_{[x\geq 0]}.$ Then, as $n\to\infty$ and
$h\to 0$,
\begin{equation}
\label{mnan} \frac{\sqrt{n}}{h^{\lambda(1+\alpha)+\lambda_0-1}
\zeta(h)}\, M_n\convd \frac{1}{2}\sqrt{2}\,\frac{A}{\pi C}\,\Big({\mu \over
\lambda}\Big)^{1+\alpha}\Gamma(\alpha
+1)\,V,
\end{equation}
where $\Gamma$ denotes the gamma function.
\end{theorem}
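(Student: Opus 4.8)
The plan is to show that in the supersmooth regime with $\lambda=2$ the centred estimator, after normalisation, collapses to a \emph{two-dimensional} Gaussian object whose supremum over $[0,1]$ is an explicit amplitude, and to identify that amplitude as a scaled Rayleigh variable. First I would pin down the asymptotics of the kernel $v_h$. Since $\phi_w$ has support $[-1,1]$, we have $v_h(u)=\frac{1}{2\pi}\int_{-1}^{1}(\phi_w(s)/\phi_k(s/h))e^{-isu}\,ds$, and Condition \ref{condk} gives $1/\phi_k(s/h)\sim (h^{\lambda_0}/(C|s|^{\lambda_0}))\exp[|s|^\lambda/(\mu h^\lambda)]$, so the integrand is dominated by the two endpoints $s=\pm1$, where the exponential is largest and $\phi_w$ vanishes like $A(1-|s|)^{\alpha}$ by Condition \ref{condw}. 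Setting $s=1-t$ and expanding $(1-t)^\lambda=1-\lambda t+O(t^2)$, the factor $\exp[(1-t)^\lambda/(\mu h^\lambda)]=\zeta(h)\exp[-\lambda t/(\mu h^\lambda)](1+o(1))$ localises the integral to $t\sim h^\lambda$; the substitution $t=(\mu h^\lambda/\lambda)\tau$ produces the gamma integral $\int_0^\infty \tau^\alpha e^{-\tau}\,d\tau=\Gamma(\alpha+1)$. Collecting the endpoint $s=1$ and its mirror $s=-1$ (whose contribution is the complex conjugate), I expect
\[
v_h(u)=\frac{A}{\pi C}\Big(\frac{\mu}{\lambda}\Big)^{1+\alpha}h^{\lambda(1+\alpha)+\lambda_0}\,\zeta(h)\,\Gamma(\alpha+1)\cos u\,(1+o(1)),
\]
with a remainder I must control uniformly in $u$; this is where $\lambda=2$ and the second-moment hypothesis enter, since the neglected phase factor $\exp[i(\mu h^\lambda\tau/\lambda)u]$ contributes a correction of order $h^\lambda u\sim h^{\lambda-1}(x-X_j)$ whose centred sums are tamed by $\ex[X_j^2]<\infty$.

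With this approximation, writing $B_h=(A/(\pi C))(\mu/\lambda)^{1+\alpha}h^{\lambda(1+\alpha)+\lambda_0}\zeta(h)\Gamma(\alpha+1)$ and using $\cos((x-X_j)/h)=\cos(x/h)\cos(X_j/h)+\sin(x/h)\sin(X_j/h)$ to split the $x$-dependence from the data, I obtain
\[
\sqrt{n}\big(f_{nh}(x)-\ex f_{nh}(x)\big)\approx \frac{B_h}{h}\Big(\cos(x/h)\,\bar C_n+\sin(x/h)\,\bar S_n\Big),
\]
where $\bar C_n=n^{-1/2}\sum_{j}(\cos(X_j/h)-\ex\cos(X_j/h))$ and $\bar S_n=n^{-1/2}\sum_{j}(\sin(X_j/h)-\ex\sin(X_j/h))$. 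The crux, and the main obstacle, is to prove that the error in replacing $\sqrt{n}(f_{nh}-\ex f_{nh})$ by this rank-two expression is $o_P(B_h/h)$ \emph{uniformly} over $x\in[0,1]$, so that it is negligible after dividing by the normalising sequence; here I would bound the uniform remainder by the supremum over $x$ of a single empirical average of the kernel remainder and dominate the resulting terms using the finite second moment.

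Next I would establish the joint asymptotic normality of $(\bar C_n,\bar S_n)$. Because $|\cos(X_j/h)|,|\sin(X_j/h)|\le 1$, the Lindeberg condition for the triangular array is automatic, so only the limiting covariances are needed. Using $\cos^2=(1+\cos(2\,\cdot))/2$, $\sin^2=(1-\cos(2\,\cdot))/2$ and $\cos\sin=\tfrac12\sin(2\,\cdot)$ together with the Riemann--Lebesgue decay of $\phi_g$ at $\pm1/h$ and $\pm2/h$ (valid since $g$ is integrable), I get $\var(\cos(X/h))\to\tfrac12$, $\var(\sin(X/h))\to\tfrac12$ and $\cov(\cos(X/h),\sin(X/h))\to0$. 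Hence $(\bar C_n,\bar S_n)\convd (Z_1,Z_2)/\sqrt{2}$ with $Z_1,Z_2$ independent standard normals.

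Finally I would evaluate the supremum. For fixed $\bar C_n,\bar S_n$ one has $\cos(x/h)\bar C_n+\sin(x/h)\bar S_n=\sqrt{\bar C_n^2+\bar S_n^2}\,\cos(x/h-\theta_n)$, and as $x$ ranges over $[0,1]$ the phase $x/h$ sweeps an interval of length $1/h$, which exceeds $\pi$ once $h$ is small, so the supremum of the absolute value equals the amplitude $\sqrt{\bar C_n^2+\bar S_n^2}$ exactly. Combining the steps and noting that $(B_h/h)/(h^{\lambda(1+\alpha)+\lambda_0-1}\zeta(h))=(A/(\pi C))(\mu/\lambda)^{1+\alpha}\Gamma(\alpha+1)$, I find
\[
\frac{\sqrt{n}}{h^{\lambda(1+\alpha)+\lambda_0-1}\zeta(h)}\,M_n\convd \frac{A}{\pi C}\Big(\frac{\mu}{\lambda}\Big)^{1+\alpha}\Gamma(\alpha+1)\,\frac{1}{\sqrt{2}}\sqrt{Z_1^2+Z_2^2}.
\]
Since $V=\sqrt{Z_1^2+Z_2^2}$ is Rayleigh-distributed and $1/\sqrt{2}=\tfrac12\sqrt{2}$, this is exactly \eqref{mnan}, the uniform remainders being discarded by Slutsky's lemma.
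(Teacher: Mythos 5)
Your reduction of the main term is sound and in fact streamlines the paper's argument: the endpoint Laplace asymptotics you derive for $v_h$ reproduce exactly what the paper imports from Lemma 5 of van Es and Uh (their expansion \eqref{eu}), and your rank-two identity $\cos((x-X_j)/h)=\cos(x/h)\cos(X_j/h)+\sin(x/h)\sin(X_j/h)$ replaces the paper's functional CLT for the cosine process on $C[0,2\pi]$ (Lemma \ref{limsup}) and the separate Rayleigh computation (Lemma \ref{distsup}) by a bivariate CLT for $(\bar C_n,\bar S_n)$ plus an exact evaluation of the supremum of $|\cos(x/h-\theta_n)|$ once $1/h\geq\pi$. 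Your covariance limits via Riemann--Lebesgue applied to $\phi_g$ at $\pm 1/h,\pm 2/h$ are equivalent to the paper's use of $Y_j=(X_j/h)\bmod 2\pi\convd \operatorname{Un}(0,2\pi)$. Had the remainder been handled, this would be a genuinely leaner route, since no tightness argument is needed for the main term.

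However, the step you yourself call ``the crux, and the main obstacle'' --- uniform negligibility over $x\in[0,1]$ of the error in the rank-two approximation --- is never proved, and the sketch you give would fail. Bounding the supremum ``by the supremum over $x$ of a single empirical average of the kernel remainder'' dominated via $\ex[X_j^2]<\infty$ is a termwise bound that discards the centering: the normalizer $a_n=\sqrt{n}\,h^{-\lambda(1+\alpha)-\lambda_0+1}(\zeta(h))^{-1}$ carries a factor $\sqrt{n}$, while the uncentered remainder is only of relative size $h^{\lambda-1}(|X_j|+1)$ per observation, so a termwise estimate yields $O_P(\sqrt{n}\,h^{\lambda-1})$, which need not vanish --- the theorem imposes no relation between $h$ and $n$. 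One must exploit the variance cancellation ($\var$ of the centered average is of order $1/n$, killing the $\sqrt{n}$) \emph{uniformly} in $x$, and this is precisely the content of the paper's Lemma \ref{lemma3}: the error is split into three processes $R_n^{(1)},R_n^{(2)},R_n^{(3)}$ via $\epsilon$-cutoffs and the $o(|t|^{-1})$ refinement in Condition \ref{condk}, pointwise negligibility is imported from van Es and Uh, and tightness in $C[0,1]$ is verified through second-moment increment bounds; the $R_n^{(1)}$ increment bound comes out as $O(h^{2(\lambda-2)})(x_2-x_1)^2$ in \eqref{bound}, i.e.\ $O(1)$ precisely and only because $\lambda=2$ --- the delicate point the paper explicitly flags as essential. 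Relatedly, your multiplicative form $v_h(u)=B_h\cos u\,(1+o(1))$ cannot hold uniformly in $u$, since the neglected phase contributes an \emph{additive} error growing like $h^{\lambda-1}|x-X_j|$ relative to the main term; it is exactly this additive error that the decomposition and the tightness (or an equivalent chaining) argument must control. Without such an argument your proof is incomplete at its acknowledged central step.
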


By assuming $\lambda=2$ we restrict ourselves to deconvolution
problems for error distributions with characteristic functions
that have an exponential tail like the characteristic function of
a normal density. The most important case covered by this
condition is standard normal deconvolution, where $\lambda=2,
\lambda_0=0,\mu=2$ and $C=1.$ The condition $\lambda=2$ seems to
be essential in the proof of Lemma \ref{lemma3}, specifically in
\eqref{bound}, where we prove a condition for tightness of the
remainder process $R^{(1)}_n$. Whether it can be relaxed by other
approaches, avoiding tightness, remains open.

The rate of convergence in  Theorem \ref{thm:sup} once again
reflects the difficulty of the supersmooth deconvolution problem
compared to the ordinary smooth deconvolution. Furthermore, unlike
in ordinary smooth deconvolution, see \citet{bissantz}, in order
to obtain the asymptotic distribution of $M_n,$ we do not have to
subtract a drift term. This also has a parallel when considering
the asymptotics of the $\operatorname{ISE}[f_{nh}]$ in the
supersmooth deconvolution, see \citet{holzmann} for additional
details. Notice also that unlike the direct density estimation or
the ordinary smooth deconvolution, see \citet{bickelr} and
\citet{bissantz}, the limit distribution in \eqref{mnan} is not
Gumbel, which confirms the conjecture in \citet{bissantz} for the
case $\lambda=2.$

\section{Proof of Theorem \ref{thm:sup}}

The proof of Theorem \ref{thm:sup} is based on a decomposition of $f_{nh}(x)$ in \citet{vanes1},
which is the basis of the proof of their asymptotic normality theorem. We have
\begin{equation}
\label{dec}
\begin{split}
f_{nh}(x)&=
\frac{1}{\pi C}\,h^{\lambda_0-1}\int_\epsilon^1 \phi_w(s)
s^{-\lambda_0} \exp(s^\lambda/(\mu h^\lambda))ds  \frac{1}{n}\sum_{j=1}^n \cos\Big(\frac{X_j-x}{h}\Big)\\
&+
R_n^{(1)}(x)+R_n^{(2)}(x)+R_n^{(3)}(x),
\end{split}
\end{equation}
where
$
R_n^{(l)}(x)=({1}/{n})\sum_{j=1}^n R^{(1)}_{n,j}(x), l=1,2,3,
$
and
\begin{align*}
&R^{(1)}_{n,j}(x)=\frac{1}{C}{1\over \pi}h^{\lambda_0-1}\int_\epsilon^1
\Big(\cos\Big(s\Big(\frac{X_j-x}{h}\Big)\Big)- \cos\Big(\frac{X_j-x}{h}\Big)\Big)\nonumber\\
&\quad\quad\quad\quad \times \phi_w(s)s^{-\lambda_0} \exp(s^{\lambda}/(\mu h^\lambda))ds\\
&R_{n,j}^{(2)}(x)=\frac{1}{2\pi h}\int_{-\epsilon}^\epsilon
\exp\Big(is\Big(\frac{X_j-x}{h}\Big)\Big)\phi_w(s)\frac{1}{\phi_k(s/h)}ds\\
&R_{n,j}^{(3)}(x)=\frac{1}{2\pi h}
\Big(\int_{-1}^{-\epsilon}+\int_{\epsilon}^1\Big)
\exp\Big(is\Big(\frac{X_j-x}{h}\Big)\Big)\phi_w(s)\nonumber\\
&\quad\quad\quad\quad\times\Big(\frac{1}{\phi_k(s/h)}-\frac{1}{C}\Big(\frac{|s|}{h}\Big)^{-\lambda_0}
\exp(|s|^\lambda/(\mu h^\lambda))\Big) ds.
\end{align*}
We will write $R_n^{(l)},l=1,2,3$ for the stochastic processes
$R_n^{(l)}=(R_n^{(l)}(x))_{x\in[0,1]}.$ Notice that these
processes belong to the space $C[0,1].$

Now the rough idea is to derive the asymptotic distribution of the
supremum of the first summand in \eqref{dec} minus its expectation
and to show that the remainder terms are negligible. Define the
process $U_n$ as $ U_n(x)=n^{-1/2}\, \sum_{j=1}^n U_{n,j}(x), $
where $ U_{n,j}(x)=\cos((X_j-x)/h)- \ex [\cos((X_j-x)/h)] $. Note
that this is a process with expectation equal to zero at every
$x$. Write
\begin{equation*}
S_n = \sup_{0\leq x\leq 1} |U_n(x)|.
\end{equation*}

\begin{lemma}\label{limsup}
Under the conditions of Theorem \ref{thm:sup} we have, as
$n\to\infty$ and $h\to 0$,
$$
 S_n\convd \sup_{0\leq x\leq 2\pi} |W(x)|,
$$
where $W$ is a zero mean Gaussian process   on $[0,2\pi]$ with
covariance function
$\cov(W(x_1),W(x_2))=({1}/{2})\cos(x_1-x_2)$.
\end{lemma}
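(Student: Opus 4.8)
The plan is to exploit the fact that the process $U_n$ is essentially two-dimensional. Applying the addition formula $\cos((X_j-x)/h)=\cos(X_j/h)\cos(x/h)+\sin(X_j/h)\sin(x/h)$ inside the definition of $U_{n,j}$, I would write
$$
U_n(x)=\xi_n\cos(x/h)+\eta_n\sin(x/h),
$$
where $\xi_n=n^{-1/2}\sum_{j=1}^n(\cos(X_j/h)-\ex\cos(X_j/h))$ and $\eta_n=n^{-1/2}\sum_{j=1}^n(\sin(X_j/h)-\ex\sin(X_j/h))$. Thus the entire sample path of $U_n$ is determined by the single random vector $(\xi_n,\eta_n)$, so no genuine infinite-dimensional weak convergence argument is needed for this lemma.

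First I would establish the joint asymptotic normality $(\xi_n,\eta_n)\convd(\xi,\eta)$, where $\xi$ and $\eta$ are independent $N(0,1/2)$ variables. Since $\cos(X_j/h)$ and $\sin(X_j/h)$ are bounded by one, the Lindeberg condition for the triangular array holds automatically for every $h$, so by the Cram\'er--Wold device it suffices to compute the limiting covariance matrix of $(\xi_n,\eta_n)$. Using $\cos^2 u=(1+\cos 2u)/2$, $\sin^2 u=(1-\cos 2u)/2$ and $\cos u\sin u=(\sin 2u)/2$ together with $\ex\cos(tX_j)=\operatorname{Re}\phi_X(t)$ and $\ex\sin(tX_j)=\operatorname{Im}\phi_X(t)$ (where $\phi_X(t)=\ex e^{itX_1}$), the variances and covariance reduce to expressions in $\phi_X(1/h)$ and $\phi_X(2/h)$. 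Because $X_j$ has the density $g=f*k$, the Riemann--Lebesgue lemma gives $\phi_X(t)\to 0$ as $t\to\infty$, whence $\var(\xi_n)\to 1/2$, $\var(\eta_n)\to 1/2$ and $\cov(\xi_n,\eta_n)\to 0$ as $h\to 0$.

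Next I would reduce $S_n$ to a function of $(\xi_n,\eta_n)$ via the amplitude--phase representation $\xi_n\cos\theta+\eta_n\sin\theta=\sqrt{\xi_n^2+\eta_n^2}\,\cos(\theta-\psi_n)$, with $\psi_n$ the random phase of $(\xi_n,\eta_n)$. Setting $\theta=x/h$ gives $|U_n(x)|=\sqrt{\xi_n^2+\eta_n^2}\,|\cos(x/h-\psi_n)|$, so that
$$
S_n=\sqrt{\xi_n^2+\eta_n^2}\,\sup_{0\le y\le 1/h}|\cos(y-\psi_n)|.
$$
The key observation is that once $h$ is small enough that $1/h\ge\pi$, the interval $[0,1/h]$ has length at least $\pi$ and therefore contains a point at which $y-\psi_n$ is an integer multiple of $\pi$; at such a point $|\cos(y-\psi_n)|=1$, so the supremum equals exactly $1$. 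Hence, deterministically for all sufficiently small $h$, $S_n=\sqrt{\xi_n^2+\eta_n^2}$, and the continuous mapping theorem yields $S_n\convd\sqrt{\xi^2+\eta^2}$.

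Finally I would identify the limit with $\sup_{0\le x\le 2\pi}|W(x)|$. Taking $W(x)=\xi\cos x+\eta\sin x$ with $\xi,\eta$ independent $N(0,1/2)$, one checks directly that $\cov(W(x_1),W(x_2))=\tfrac12(\cos x_1\cos x_2+\sin x_1\sin x_2)=\tfrac12\cos(x_1-x_2)$, so $W$ is precisely the Gaussian process of the statement, and the same amplitude--phase argument over the full period $[0,2\pi]$ gives $\sup_{0\le x\le 2\pi}|W(x)|=\sqrt{\xi^2+\eta^2}$. The one step requiring genuine care is the covariance computation of the second paragraph, where both the vanishing of the cross term and the limiting value $1/2$ hinge on the Riemann--Lebesgue decay of $\phi_X$; the remaining steps are exact and elementary.
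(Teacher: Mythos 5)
Your proof is correct, and it takes a genuinely different route from the paper's. The paper substitutes $x=yh$, uses the periodicity of the cosine to rewrite $S_n$ as the supremum over $[0,2\pi]$ of a process built from $Y_j=(X_j/h)\bmod 2\pi$, invokes Lemma 6 of \citet{vanes1} to get $Y_j\convd\operatorname{Un}(0,2\pi)$, and then establishes full weak convergence of that process in $C[0,2\pi]$ by finite-dimensional convergence plus a tightness verification via Theorem 12.3 of \citet{billingsley}, finishing with the continuous mapping theorem. You instead exploit the fact that the addition formula places $U_n$ in the two-dimensional span of $\cos(x/h)$ and $\sin(x/h)$, so the lemma reduces to a bivariate Lindeberg CLT for $(\xi_n,\eta_n)$ --- with the covariance limit $\tfrac12 I$ coming from Riemann--Lebesgue decay of $\phi_X$, which is legitimate since $X_j$ has the integrable density $g=f*k$ (this step plays the role of the paper's appeal to the uniform limit of $Y_j$) --- together with the exact amplitude identity $S_n=\sqrt{\xi_n^2+\eta_n^2}$, which holds deterministically once $1/h\geq\pi$ because the image interval of $y-\psi_n$ then contains a multiple of $\pi$; no tightness argument is needed at all. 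Your identification of the limit also effectively absorbs the paper's Lemma \ref{distsup}: the amplitude--phase computation you apply to $U_n$ is exactly the trick the paper applies to the limiting process $W$, so your argument yields in one stroke both the lemma and the Rayleigh representation $\sup_{0\leq x\leq 2\pi}|W(x)|\eqd\tfrac12\sqrt2\,V$. What the paper's heavier machinery buys is process-level convergence in $C[0,2\pi]$, which is more information than the lemma asserts; but since the proof of Theorem \ref{thm:sup} only uses convergence of the scalar $S_n$ (combined with Lemma \ref{lemma3} and Theorem 4.1 of \citet{billingsley}), your leaner finite-dimensional argument would serve equally well there. The only points worth stating more carefully are routine: the Lindeberg condition holds for every nonzero linear combination because the summands are bounded and the row variance tends to a positive limit, and on the null event $\xi_n^2+\eta_n^2=0$ the identity $S_n=\sqrt{\xi_n^2+\eta_n^2}$ holds trivially.
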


\begin{proof}

Replacing $x$ by $yh,$ by the periodicity of
the cosine function we have for $h\leq (2\pi)^{-1}$ that
\begin{eqnarray*}
\lefteqn{S_n = \sup_{0\leq x\leq 1} |U_n(x)|}\\
&=& \sup_{0\leq x\leq 1} \Big|\frac{1}{\sqrt{n}}\, \sum_{j=1}^n
\Big(\cos\Big(\frac{X_j-x}{h}\Big)- \ex
\Big[\cos\Big(\frac{X_j-x}{h}\Big)\Big]\Big)\Big|\\
&=& \sup_{0\leq  y\leq 1/h} \Big|\frac{1}{\sqrt{n}}\, \sum_{j=1}^n
\Big(\cos\Big(\frac{X_j-yh}{h}\Big)- \ex
\Big[\cos\Big(\frac{X_j-yh}{h}\Big)\Big]\Big)\Big|\\
&=& \sup_{0\leq  y\leq 1/h} \Big|\frac{1}{\sqrt{n}}\, \sum_{j=1}^n
 (\cos(Y_j-y)- \ex
[\cos(Y_j-y)] )\Big|\\
&=& \sup_{0 \leq  y\leq 2\pi } \Big|\frac{1}{\sqrt{n}}\,
\sum_{j=1}^n  (\cos(Y_j-y)- \ex [\cos(Y_j-y)] )\Big|,\\
&=& \sup_{0\leq y\leq 2\pi} |W_n(y)|,
\end{eqnarray*}
where $ Y_j=(X_j/h)\negthickspace\negthickspace\mod 2\pi $ and the
process $W_n$ on $[0,2\pi]$ is given by $ W_n(y)=n^{-1/2}\,
\sum_{j=1}^n (W_{n,j}(y)-\ex[W_{n,j}(y)]) $ with $
W_{n,j}(y)=\cos(Y_j-y). $

By Lemma 6 of \citet{vanes1} we know that $Y_j\convd
\operatorname{Un}(0,2\pi)$ as $h\rightarrow 0$ for each $j,$ where
$\operatorname{Un}(0,2\pi)$ denotes the uniform distribution on
$[0,2\pi].$ Hence by the dominated convergence theorem we get that
\begin{eqnarray*}
\lefteqn{\cov \Big(\cos\Big( Y_j-y_1 \Big),\cos\Big( Y_j-y_2 \Big)\Big)} \\
&\rightarrow& \frac{1}{2\pi}\int_0^{2\pi}
\cos(u-y_1)\cos(u-y_2)du= \frac{1}{2}\cos(y_1-y_2).
\end{eqnarray*}

It follows that we have to study the convergence of the process
$W_n(x) -\ex [W_n(x)]$ which belongs to $C[0,2\pi].$ According to
Prohorov's theorem and in particular Theorem 8.1 of
\citet{billingsley}, it suffices to show weak convergence of the
finite dimensional distributions and tightness of the sequence. By
the multivariate central limit theorem in the triangular array
scheme or Cramer-Wold device, see Theorem 7.7 in \citet{billingsley}, the finite dimensional distributions of the process $W_n$
converge to multivariate normal distributions with covariances
given by $\cov(W(y_1),W(y_2))=({1}/{2})\cos(y_1-y_2)$. To prove
tightness, we will verify conditions of Theorem 12.3 of
\citet{billingsley}. First of all, notice that the sequence
$W_n(0)$ is tight, because the asymptotic normality of $W_n(0)$
follows by a univariate Lyapunov central limit theorem in a
trinagular array scheme, see Theorem 7.3 in \citet{billingsley}.
Furthermore, for an arbitrary positive $\eta,$
\begin{align*}
&P\Big(|W_n(y_2)-\ex [W_n(y_2)]-(W_n(y_1)-\ex [W_n(y_1)])|\geq\eta\Big)\\
&\leq \frac{1}{\eta^2}\, \var [W_{n,j}(y_2)-W_{n,j}(y_1)]\leq
\frac{1}{\eta^2}\,  \ex [(W_{n,j}(y_2)-W_{n,j}(y_1))^2]\\
&\leq \frac{1}{\eta^2}\,(y_2-y_1)^2,
\end{align*}
which follows from the fact that
\begin{align*}
|\cos(Y_j-y_2)-\cos(Y_j-y_1)|&=\left|2\sin\left(\frac{2Y_j-y_2-y_1}{2}\right)\sin\left(\frac{y_1-y_2}{2}\right)\right|\\
&\leq|y_1-y_2|.
\end{align*}
Here we used the inequality $|\sin x|\leq |x|.$ Therefore $W_n$
converges weakly to a zero mean Gaussian process $W$ on $[0,2\pi]$
with covariance function
$\cov(W(y_1),W(y_2))=({1}/{2})\cos(y_1-y_2).$ By the continuous
mapping theorem, see Theorem 5.1 in \citet{billingsley},
the supremum of $|W_n|$ then converges weakly to the supremum of
the absolute value of the limit process, which proves the lemma.
\end{proof}

\begin{lemma}\label{distsup}
\label{lemma2}
With $V$ as in Theorem \ref{thm:sup}, we have
\begin{equation}
\label{rayleigh}
\sup_{0\leq x\leq 2\pi} |W(x)|\eqd
\frac{1}{2}\sqrt{2}\,V.
\end{equation}
\end{lemma}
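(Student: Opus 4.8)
The plan is to exploit the very special rank-two structure of the covariance function. Applying the cosine addition formula gives
\[
\cov(W(x_1),W(x_2)) = \frac{1}{2}\cos(x_1-x_2) = \frac{1}{2}\cos x_1\cos x_2 + \frac{1}{2}\sin x_1 \sin x_2,
\]
which is exactly the covariance of the process $\widetilde W(x) = \xi\cos x + \eta\sin x$, where $\xi$ and $\eta$ are independent $N(0,1/2)$ random variables. Since a zero-mean Gaussian process is determined by its covariance function, $W$ and $\widetilde W$ have the same finite-dimensional distributions; and as both have continuous (indeed real-analytic) sample paths, the functional $\sup_{0\leq x\leq 2\pi}|\cdot|$ has the same law under both. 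I would therefore work throughout with the explicit representation $\widetilde W$.

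Next I would rewrite $\widetilde W$ in amplitude-phase form. Setting $R = \sqrt{\xi^2+\eta^2}$ and choosing a phase $\Phi$ with $\cos\Phi = \xi/R$ and $\sin\Phi = \eta/R$, we obtain $\widetilde W(x) = R\cos(x-\Phi)$. As $x$ ranges over the full period $[0,2\pi]$, the argument $x-\Phi$ sweeps an interval of length $2\pi$, so $|\cos(x-\Phi)|$ attains its maximal value $1$. Hence
\[
\sup_{0\leq x\leq 2\pi}|\widetilde W(x)| = R = \sqrt{\xi^2+\eta^2}.
\]

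Finally I would identify the law of $R$. Writing $\xi = \xi'/\sqrt{2}$ and $\eta = \eta'/\sqrt{2}$ with $\xi',\eta'$ independent standard normals, we get $R = (1/\sqrt{2})\sqrt{(\xi')^2+(\eta')^2}$. The Euclidean norm of a standard bivariate normal vector has precisely the Rayleigh density $f_V$ of Theorem \ref{thm:sup}, as one checks by passing to polar coordinates, so $\sqrt{(\xi')^2+(\eta')^2}\eqd V$ and therefore $R\eqd (1/\sqrt{2})\,V = \frac{1}{2}\sqrt{2}\,V$, which is exactly \eqref{rayleigh}.

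The argument is short, and the only point requiring a little care is the first step: one must verify that matching the mean and covariance legitimately transfers the distribution of the supremum from the abstract limit process $W$ to the concrete process $\widetilde W$. This is where continuity of the sample paths enters, guaranteeing that $\sup_{0\leq x\leq 2\pi}|\cdot|$ is a measurable functional whose distribution depends only on the finite-dimensional laws. Everything else is an exact computation with no asymptotics involved.
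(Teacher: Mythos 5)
Your proposal is correct and follows essentially the same route as the paper: both identify $W$ in law with the explicit rank-two Gaussian process $\tilde W(x)=\frac{1}{2}\sqrt{2}\,(N_1\cos x+N_2\sin x)$ via the covariance factorisation $\frac{1}{2}\cos(x_1-x_2)=\frac{1}{2}(\cos x_1\cos x_2+\sin x_1\sin x_2)$, pass to the amplitude--phase form $R\cos(x-\Phi)$, observe that the supremum of $|\cos(x-\Phi)|$ over a full period is $1$, and recognise $\sqrt{N_1^2+N_2^2}$ as Rayleigh. Your extra remark justifying the transfer of the supremum's law through equality of finite-dimensional distributions plus path continuity makes explicit a step the paper leaves implicit, but it is the same argument.
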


\begin{proof}
Let $N_1$ and $N_2$ denote two independent standard normal random
variables and let us define the process $\tilde W$ by $\tilde
W=(\tilde W(x))_{x\in[0,2\pi]}$, where
\begin{equation*}
\tilde W(x)\eqd\frac{1}{2}\sqrt{2}(N_1\cos x +N_2\sin x).
\end{equation*}
Since the covariance function $\cov(W(x_1),W(x_2))$ of the process
$W$, given by $({1}/{2})\cos(x_1-x_2)$,  equals $\cov(\tilde
W(x_1),\tilde W(x_2))$ by
\begin{align*}
\cov\Big(\frac{1}{2}\sqrt{2}&(N_1\cos x_1 +N_2\sin x_1),\frac{1}{2}\sqrt{2}(N_1\cos x_2 +N_2\sin x_2)\Big)\\
&=\frac{1}{2}\,(\cos x_1\cos x_2+ \sin x_1\sin x_2)=\frac{1}{2}\,\cos(x_1-x_2),
\end{align*}
it follows that $ W\eqd \tilde W$.

Next write
\begin{eqnarray}
\lefteqn{\frac{1}{2}\sqrt{2}(N_1\cos x +N_2\sin x)}\nonumber\\
&=&
\frac{1}{2}\sqrt{2}\sqrt{N_1^2+N_2^2}\Big(\frac{N_1}{\sqrt{N_1^2+N_2^2}}\,\cos
x +
\frac{N_2}{\sqrt{N_1^2+N_2^2}}\,\sin x\Big)\nonumber\\
&=&
\frac{1}{2}\sqrt{2}\sqrt{N_1^2+N_2^2}(\cos \xi\cos x+\sin \xi\sin x)\nonumber\\
&=&
\frac{1}{2}\sqrt{2}\sqrt{N_1^2+N_2^2}\cos(x-\xi),\label{shiftcos}
\end{eqnarray}
for a $\xi$ such that $\cos\xi =N_1/\sqrt{N_1^2+N_2^2}$ and
$\sin\xi = N_2/\sqrt{N_1^2+N_2^2}$. The supremum of the absolute
value of \eqref{shiftcos} is equal to
$({1}/{2})\sqrt{2}\sqrt{N_1^2+N_2^2}=({1}/{2})\sqrt{2}V,$ where
$V$ has a Rayleigh distribution. This entails \eqref{rayleigh}.
\end{proof}

\begin{lemma}
\label{lemma3} Let
$a_n={\sqrt{n}}{h^{-\lambda(1+\alpha)-\lambda_0+1}
(\zeta(h))^{-1}}$ denote the normalising sequence in Theorem
\ref{thm:sup}. For $l=1,2,3$ we have
\begin{equation*}
a_n(R_n^{(l)}-\ex [R_n^{(l)}])\convp {\bf{0}}
\end{equation*}
as $n\rightarrow\infty$ and $h\rightarrow 0.$ Here ${\bf{0}}$
denotes the zero process on $[0,1].$
\end{lemma}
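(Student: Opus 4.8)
The plan is to treat the three values of $l$ separately and, since the limit process $\mathbf{0}$ is constant, to deduce the asserted convergence in probability from weak convergence of $a_n(R_n^{(l)}-\ex[R_n^{(l)}])$ to the zero process in $C[0,1]$. As in Lemma \ref{limsup}, I would split this into finite-dimensional convergence and tightness, verifying the latter through Theorem 12.3 of \citet{billingsley}. Because each $R_n^{(l)}$ is an average of i.i.d.\ summands, every second-moment estimate reduces to one summand:
\[
\var\big[a_n R_n^{(l)}(x)\big]=\frac{a_n^2}{n}\var\big[R_{n,j}^{(l)}(x)\big],\qquad \frac{a_n^2}{n}=h^{-2(\lambda(1+\alpha)+\lambda_0-1)}(\zeta(h))^{-2},
\]
and similarly for increments in $x$. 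The engine of all the estimates is a Laplace/Watson analysis of $\int_\epsilon^1(1-s)^\beta\phi_w(s)s^{-\lambda_0}\exp(s^\lambda/(\mu h^\lambda))\,ds$: by Condition \ref{condw}, $\phi_w(s)\sim A(1-s)^\alpha$ as $s\uparrow 1$, the integrand concentrates near $s=1$, and the integral is of order $\zeta(h)\,h^{\lambda(1+\alpha+\beta)}$. For $\beta=0$ this is, up to constants, the order $h^{\lambda(1+\alpha)+\lambda_0-1}\zeta(h)$ of the leading coefficient in \eqref{dec}.

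I would dispose of $R_n^{(2)}$ and $R_n^{(3)}$ first. In $R_{n,j}^{(2)}$ the integration is over $[-\epsilon,\epsilon]$, where the blow-up factor obeys $\exp(|s|^\lambda/(\mu h^\lambda))\le(\zeta(h))^{\epsilon^\lambda}$; consequently every second moment carries a surplus factor $(\zeta(h))^{2(\epsilon^\lambda-1)}$, which tends to $0$ since $\epsilon<1$ and overwhelms both $a_n^2/n$ and any power of $h$. This makes the pointwise variances and the increment variances vanish, giving finite-dimensional convergence and, via Theorem 12.3 of \citet{billingsley}, tightness. For $R_{n,j}^{(3)}$ the sharpened Condition \ref{condk} is what does the work: the refinement $1+o(|t|^{-1})$ forces
\[
\frac{1}{\phi_k(s/h)}-\frac{1}{C}\Big(\frac{|s|}{h}\Big)^{-\lambda_0}\exp\!\Big(\frac{|s|^\lambda}{\mu h^\lambda}\Big)
\]
to be smaller than its leading part by a factor $o(h/s)$, uniformly on $[\epsilon,1]$ because $s/h\ge \epsilon/h\to\infty$. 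Hence $R_{n,j}^{(3)}$ is of smaller order than the leading term by $o(h)$, and the same surplus appears in the variance and increment bounds, again producing $o(1)$.

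The genuinely delicate term is $R_n^{(1)}$. Here I would exploit the identity $\cos(sa)-\cos a=-2\sin\!\big(\tfrac{(s+1)a}{2}\big)\sin\!\big(\tfrac{(s-1)a}{2}\big)$ with $a=(X_j-x)/h$: the factor $\sin(\tfrac{(s-1)a}{2})$ is small precisely where the weight concentrates, so that $|\cos(sa)-\cos a|\le (1-s)|X_j-x|/h$. The Watson estimate with $\beta=1$ then gives that $|R_{n,j}^{(1)}(x)|$ is at most a constant times $|X_j-x|\,\zeta(h)\,h^{\lambda(2+\alpha)+\lambda_0-2}$, whence, using $\ex[X_j^2]<\infty$,
\[
\frac{a_n^2}{n}\ex\big[(R_{n,j}^{(1)}(x))^2\big]=O\big(h^{2(\lambda-1)}\big)\longrightarrow 0,
\]
which yields finite-dimensional convergence.

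Tightness of $R_n^{(1)}$ is the main obstacle, and it is where $\lambda=2$ is forced. Writing the increment $R_{n,j}^{(1)}(x_2)-R_{n,j}^{(1)}(x_1)$ with $\delta=x_2-x_1$, expanding the two differences of cosines by the same product identity and splitting the resulting difference of $\sin\cdot\sin$ products, each piece carries a factor $(1-s)$ together with a factor $\delta/h$, one piece additionally a factor $|X_j|/h$. The Watson estimate with $\beta=1$ then leads to
\[
\frac{a_n^2}{n}\ex\big[(R_{n,j}^{(1)}(x_2)-R_{n,j}^{(1)}(x_1))^2\big]\le C\,|x_2-x_1|^2\,h^{2(\lambda-2)}.
\]
Only for $\lambda=2$ is the power $h^{2(\lambda-2)}=1$ bounded; then the right-hand side is $\le C|x_2-x_1|^2$ with $C$ uniform in $n$ and $h$, which is exactly the second-moment condition exploited in Lemma \ref{limsup}, and Theorem 12.3 of \citet{billingsley} (with the pointwise bound giving tightness at a single point) delivers tightness. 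For $\lambda<2$ the power diverges and the argument breaks down, consistent with the role of $\lambda=2$ noted after the statement of Theorem \ref{thm:sup}. Combining finite-dimensional convergence with tightness gives, for each $l=1,2,3$, weak convergence of $a_n(R_n^{(l)}-\ex[R_n^{(l)}])$ to $\mathbf{0}$, hence the claimed convergence in probability.
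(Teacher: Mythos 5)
Your proposal is correct and takes essentially the same route as the paper's proof: Prohorov's theorem via finite-dimensional convergence plus tightness through Theorem 12.3 of \citet{billingsley}, Chebyshev bounds on increments reduced to a single summand, the Watson-type estimate \eqref{eu} with $\beta=1$, the $\zeta(h)^{2(\epsilon^\lambda-1)}$ surplus for $R_n^{(2)}$, the $o(|t|^{-1})$ refinement of Condition \ref{condk} for $R_n^{(3)}$, and the same critical $O(h^{2(\lambda-2)})$ increment bound forcing $\lambda=2$ for $R_n^{(1)}$. The only cosmetic differences are that you obtain the key increment inequality for $R_n^{(1)}$ from the product-to-sum identity rather than the paper's double integral of mixed second partials (both yield the bound $h^{-2}(|X_j|+1+h)(1-s)|x_2-x_1|$), and you prove pointwise negligibility directly instead of citing \citet{vanes1}.
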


\begin{proof}
To prove the lemma, we will apply Prohorov's theorem, and in
particular Theorem 8.1 of \citet{billingsley}. Firstly, notice
that for a fixed $x$ the remainder terms $a_n(R_n^{(l)}(x)-\ex
[R_n^{(l)}(x)])$ vanish in probability, which was proved in
\citet{vanes1}. This implies that the finite dimensional vectors
of the processes $a_n(R_n^{(l)}-\ex [R_n^{(l)}])$ also converge in
probability to null vectors. To establish tightness, we will again
verify conditions of Theorem 12.3 of \citet{billingsley}. Notice
that when $x=0,$ the sequence $a_n(R_n^{(l)}(0)-\ex
[R_n^{(l)}(0)])$ is tight, since it converges to zero in
probability. Furthermore, for an arbitrary positive $\eta$ we have
\begin{align}
P\Big(a_n|R_n^{(1)}&(x_2)-\ex [R_n^{(1)}(x_2)] -(R_n^{(1)}(x_1)-\ex [R_n^{(1)}(x_1)])|\geq\eta\Big)\nonumber\\
&\leq
\frac{a_n^2}{\eta^2}\, \var [R_n^{(1)}(x_2)-R_n^{(1)}(x_1)]\nonumber\\
&=
\frac{a_n^2}{\eta^2}\,\frac{1}{n}\, \var [R_{n,1}^{(1)}(x_2)-R_{n,1}^{(1)}(x_1)]\nonumber\\
&\leq
\frac{a_n^2}{\eta^2}\,\frac{1}{n}\, \ex [(R_{n,1}^{(1)}(x_2)-R_{n,1}^{(1)}(x_1))^2]\nonumber\\
&\leq
\frac{a_n^2}{\eta^2}\,\frac{1}{C^2}{1\over \pi^2}\frac{1}{n}\,h^{2(\lambda_0-1)}(x_2-x_1)^2\nonumber\\
&\quad\quad\quad\quad\times K^2\Big(\int_\epsilon^1
\Big(\frac{1-s}{h^2}\Big)\phi_w(s)s^{-\lambda_0}
\exp(s^{\lambda}/(\mu h^\lambda))ds\Big)^2\nonumber\\
&=
K^2\frac{a_n^2}{\eta^2}\,\frac{1}{C^2}{1\over \pi^2}\frac{1}{n}\,h^{2(\lambda_0-2)-2}(x_2-x_1)^2\nonumber\\
&\quad\quad\quad\quad\times\Big(\int_\epsilon^1
\Big(1-s)\phi_w(s)s^{-\lambda_0}
\exp(s^{\lambda}/(\mu h^\lambda))ds\Big)^2\nonumber\\
&= O\Big(\frac{1}{n}\,h^{2(\lambda_0-2)-2+2(2+\alpha)\lambda}(\zeta(h))^2a_n^2\Big)\frac{1}{\eta^2}(x_2-x_1)^2\nonumber\\
&= O\Big(h^{2(\lambda -2)}\Big)\frac{1}{\eta^2}\,(x_2-x_1)^2\label{bound}\\
&= O(1)\frac{1}{\eta^2}\,(x_2-x_1)^2.\nonumber
\end{align} where $K$ is some constant. Here we used Lemma 5 of \citet{vanes1}, which states that
\begin{eqnarray}
\lefteqn{\int_\epsilon^1 s^{-\lambda_0}(1-s)^\beta \phi_w(s)
\exp(s^\lambda/(\mu h^\lambda))ds}\nonumber\\
&\qquad\qquad\sim& A \Big({\mu \over
\lambda}h^{\lambda}\Big)^{1+\alpha+\beta} \zeta(h)
\Gamma(\alpha+\beta+1),\label{eu}
\end{eqnarray}
and the fact that for $0\leq s \leq 1$ and $0\leq x_1<x_2\leq 1$
we have

\begin{align*}
&\Big|\cos\Big(s\Big(\frac{X_j-x_2}{h}\Big)\Big)-\cos\Big(\frac{X_j-x_2}{h}\Big)
-\cos\Big(s\Big(\frac{X_j-x_1}{h}\Big)\Big)+\cos\Big(\Big(\frac{X_j-x_1}{h}\Big)\Big)\Big|\\
&\quad\quad= \Big|\int_{x_1}^{x_2}\int_s^1
\frac{\partial^2}{\partial u\partial v}\Big\{
\cos\Big(v\Big(\frac{X_j-u}{h}\Big)\Big)-\cos\Big(\frac{X_j-u}{h}\Big)\Big\}dudv\Big|\\
&\quad\quad= \Big|\int_{x_1}^{x_2}\int_s^1 \Big\{\frac{1}{h}
\sin\Big(v\Big(\frac{X_j-u}{h}\Big)\Big)+v\Big(\frac{X_j-u}{h^2}\Big)
\cos\Big(\frac{X_j-u}{h}\Big)\Big\}dudv\Big|\\
&\quad\quad\leq
\int_{x_1}^{x_2}\int_s^1 \frac{1}{h^2}\,(|X_j|+1+h)\, dudv\\
&\quad\quad\leq \frac{1}{h^2}\,(|X_j|+1+h)(1-s)|x_1-x_2| .
\end{align*}
Hence the process $a_n(R_n^{(1)}-\ex
[R_n^{(1)}])$ is tight.

In order to prove tightness of the process $a_n(R_n^{(2)}-\ex
[R_n^{(2)}]),$ note that, as above, for positive $\eta$
\begin{align*}
P\Big(a_n|R_n^{(2)}&(x_2)-\ex [R_n^{(2)}(x_2)] -(R_n^{(2)}(x_1)-\ex [R_n^{(2)}(x_1)])|\geq\eta\Big)\\
&\leq
\frac{a_n^2}{\eta^2}\,\frac{1}{n}\, \ex [(R_{n,1}^{(2)}(x_2)-R_{n,1}^{(2)}(x_1))^2]\\
&\leq
4\frac{a_n^2}{\eta^2}\,{1\over 4\pi^2h^2}\,\frac{1}{n}
\Big(\int_{-\epsilon}^\epsilon \frac{s}{h}\phi_w(s)\frac{1}{\phi_k(s/h)}\,ds\Big)^2(x_2-x_1)^2\\
&=
4\frac{a_n^2}{\eta^2}\,{1\over 4\pi^2h^4}\,\frac{1}{n}
\Big(\int_{-\epsilon}^\epsilon s\phi_w(s)\frac{1}{\phi_k(s/h)}\,ds\Big)^2(x_2-x_1)^2\\
&\leq
4 a_n^2{1\over 4\pi^2h^4}\,\frac{1}{n}\, (2\epsilon)^2\epsilon^2
\Big(\int_{-\epsilon}^\epsilon \frac{1}{\phi_k(s/h)}\,ds\Big)^2
\frac{1}{\eta^2}(x_2-x_1)^2\\
&\leq 4 a_n^2{1\over \pi^2h^4}\,\frac{1}{n}\, \epsilon^4
\Big(\sup_{-\epsilon\leq s\leq\epsilon}
\frac{1}{\phi_k(s/h)}\Big)^2
\frac{1}{\eta^2}(x_2-x_1)^2\\
&\leq
4a_n^2{2\over C^2\pi^2 }\,\frac{1}{n}\,
(\epsilon/h)^{4-2\lambda_0}\exp(2(\epsilon/h)^{\lambda}/\mu)
\frac{1}{\eta^2}(x_2-x_1)^2\\
&=
o(1)\frac{1}{\eta^2}(x_2-x_1)^2,
\end{align*}
where $K$ is some constant and where we used the fact that for $0\leq s \leq 1,$
\begin{multline}\label{ineq}
\Big|\exp\Big(is\Big(\frac{X_j-x_2}{h}\Big)\Big)
-\exp\Big(is\Big(\frac{X_j-x_1}{h}\Big)\Big)\Big|\\
\leq \Big|\cos\Big(s\Big(\frac{X_j-x_2}{h}\Big)\Big)-\cos\Big(s\Big(\frac{X_j-x_1}{h}\Big)\Big)\Big|\\
+\Big|\sin\Big(s\Big(\frac{X_j-x_2}{h}\Big)\Big)-\sin\Big(s\Big(\frac{X_j-x_1}{h}\Big)\Big)\Big|
\leq \frac{2s}{h}|x_1-x_2|,
\end{multline}
which follows by converting the differences of sines and cosines into products and using the fact that $|\sin x|\leq |x|.$ Consequently, the process $a_n(R_n^{(2)}-\ex [R_n^{(2)}])$
is tight.

To prove tightness of the process $a_n(R_n^{(3)}-\ex
[R_n^{(3)}]),$ we first introduce the function $u$, given by
\begin{equation}\label{ufunction}
u(y)=
\frac{C|y|^{\lambda_0} \exp(-|y|^\lambda/\mu)}{\phi_k(y)}-1.
\end{equation}
By Condition \ref{condk} this function is bounded on
$\mathbb{R}\negmedspace\setminus \negmedspace (-\delta,\delta),$
where $\delta$ is an arbitrary positive number. Moreover, by
\eqref{chk} the function $xu(x)$ is also bounded and both
functions vanish at plus and minus infinity. It follows that
$(s/h)u(s/h)$ is bounded and tends to zero for all fixed $s$
with $|s| \geq \epsilon$ as $h \to 0$.

Using the function $u,$ rewrite $R_{n,j}^{(3)}(x)$ as follows
\begin{align*}
&R_{n,j}^{(3)}(x)=\frac{1}{2\pi h}
\Big(\int_{-1}^{-\epsilon}+\int_{\epsilon}^1\Big)
\exp\Big(is\Big(\frac{X_j-x}{h}\Big)\Big)\phi_w(s)\nonumber\\
&\quad\quad\quad\quad\times\Big(\frac{1}{\phi_k(s/h)}-\frac{1}{C}\Big(\frac{|s|}{h}\Big)^{-\lambda_0}
\exp(|s|^\lambda/(\mu h^\lambda))\Big) ds\\
&=
\frac{1}{2\pi h}
\Big(\int_{-1}^{-\epsilon}+\int_{\epsilon}^1\Big)
\exp\Big(is\Big(\frac{X_j-x}{h}\Big)\Big)\phi_w(s)\nonumber\\
&\quad\quad\quad\quad\times
\frac{1}{C}\Big(\frac{|s|}{h}\Big)^{-\lambda_0}
\exp(|s|^\lambda/(\mu h^\lambda))u(s/h)  ds.
\end{align*}
Next note that, as above, for positive $\eta$ we have by
\eqref{ineq} that
\begin{align*}
P\Big(a_n|R_n^{(3)}&(x_2)-\ex [R_n^{(3)}(x_2)] -(R_n^{(3)}(x_1)-\ex [R_n^{(3)}(x_1)])|\geq\eta\Big)\\
&\leq
\frac{a_n^2}{\eta^2}\,\frac{1}{n}\, \ex [(R_{n,1}^{(3)}(x_2)-R_{n,1}^{(3)}(x_1))^2]\\
&\leq \frac{a_n^2}{\eta^2}\,{1\over 4\pi^2h^2}\,\frac{1}{n}
\Big(\Big(\int_{-1}^{-\epsilon}+\int_{\epsilon}^1\Big)\phi_w(s)\\
&\quad\quad\quad\quad
\times\frac{1}{C}\Big(\frac{|s|}{h}\Big)^{-\lambda_0}
\exp(|s|^\lambda/(\mu h^\lambda))\frac{s}{h}\,u\Big(\frac{s}{h}\Big)ds\Big)^2(x_2-x_1)^2\\
&= o(1)\frac{1}{\eta^2}(x_2-x_1)^2
\end{align*}
and hence $a_n(R_n^{(3)}-\ex [R_n^{(3)}])$ is tight. By
Prohorov's theorem each of the three processes now converges
weakly to the zero process. Since the convergence in distribution
to a constant entails convergence to the same constant in
probability, this concludes the proof of the lemma.
\end{proof}
Finally, we combine the obtained results to prove Theorem
\ref{thm:sup}.
\begin{proof}[Proof of Theorem \ref{thm:sup}]
The proof is immediate from Lemmas \ref{limsup}--\ref{lemma3} just
proved, the fact that by (\ref{eu})
\begin{align*}
 a_n\, \frac{1}{\pi C}&\,h^{\lambda_0-1}\int_\epsilon^1 \phi_w(s)
s^{-\lambda_0} \exp(s^\lambda/(\mu h^\lambda))ds  \frac{1}{\sqrt{n}} \\
&\sim
\frac{A}{\pi C}\,\Big({\mu \over
\lambda}\Big)^{1+\alpha}\Gamma(\alpha+1),
\end{align*}
and Theorems 4.1 and 5.1 of
\citet{billingsley}.
\end{proof}

\end{document}